\documentclass[a4paper,12pt]{article}

\usepackage{geometry}
\usepackage{latexsym}
\usepackage{amssymb}
\usepackage{amsthm}
\usepackage{amsmath,amsfonts}
\usepackage{xcolor}
\usepackage{float}
\usepackage{url}
\usepackage[hidelinks]{hyperref}
\usepackage{titlesec}
\titleformat{\section}{\normalfont\Large\bfseries\raggedright}{\thesection}{1em}{}

\def\titlerunning#1{\gdef\titrun{#1}}
\makeatletter
\def\author#1{\gdef\autrun{\def\and{\unskip, }#1}\gdef\@author{#1}}
\def\address#1{{\def\and{\\\hspace*{18pt}}\renewcommand{\thefootnote}{}%
\footnote {#1}}%
\markboth{\autrun}{\titrun}}
\makeatother
\def\email#1{\hspace*{4pt}{\em e-mail}: #1}

\newtheorem{thm}{Theorem}[section]
\newtheorem{prop}[thm]{Proposition}

\newtheorem{cor}[thm]{Corollary}
\theoremstyle{definition}
\newtheorem{rem}[thm]{Remark}

\titlerunning{}

\title{Quasi-strongly regular digraphs constructed from transitive groups of degree $n\leq 110$}
\author{Vedrana Mikuli\'c Crnkovi\'c and Matea Zubovi\'c \v Zutolija}

\begin{document}
\maketitle

\address{V. Mikuli\'c Crnkovi\'c, M. Zubovi\'c \v Zutolija: Faculty of Mathematics, University of Rijeka, Radmile Matej\v ci\'c 2, 51000 Rijeka, Croatia;
\email{\{vmikulic,matea.zubovic\}@math.uniri.hr}
}

\begin{abstract}
In this paper we present a method for constructing directed regular graphs from a transitive permutation group. This method is a generalization of a construction method for transitive 1-designs from a finite group, described in \cite{dean1}. Using this construction, we prove the existence of directed strongly regular graphs with parameters $(72,25,15,6,10)$, $(96,11,4,3,1)$, $(96,22,16,8,4)$, $(96,26,11,7,7)$, $(96,29,23,10,8)$, $(96,42,32,16,20)$, $(96,45,35,22,20)$ and $(165,60,36,23,21)$. Finally, we classify quasi-strongly regular digraphs arising from transitive permutation groups of degree at most $30$ and from primitive permutation groups of degrees from $31$ to $110$ and rank at most $30$.

\end{abstract}

\bigskip

{\bf 2020 Mathematics Subject Classification:} 05C20, 05E30, 20B25, 20B40

{\bf Keywords:} 1-design, transitive group, directed strongly regular graph, quasi-strongly regular digraph.

\section{Introduction}
\label{intro}
Directed strongly regular graphs were introduced in \cite{duval} as a directed version of strongly regular graphs. For more recent results on directed strongly regular graphs we refer the reader to \cite{2024,2025,BP2024,zrinski,CSZ2026,novi_108,MR2026}. A classification of regular digraphs, normally regular digraphs and strongly regular digraphs is described in \cite{nrds}. As a generalization of directed strongly regular graphs, the authors in \cite{quasi} introduced quasi-strongly regular digraphs. Deza digraphs, which were introduced in \cite{deza} and further studied in \cite{deza3,deza1,deza2}, provide another generalization of directed strongly regular graphs. 

In this paper, we consider directed quasi-strongly regular graphs constructed from transitive permutation groups of degree up to 30 and primitive groups of degree $31\leq n\leq 110$. Using the \textit{GAP Transitive Groups Library} \cite{transgrp} and \textit{GAP Primitive Groups Library} \cite{primgrp}, we construct directed quasi-strongly regular graphs on $n$ vertices, $n\in\{3,\dots,110\}$. Starting from a transitive action of a group $G$ on a set $\Omega$ and a union $\Delta$ of suborbits of a stabilizer $G_{\alpha}$, we obtain a vertex-transitive regular digraph whose out-neighborhoods are precisely the sets $g.\Delta$, $g\in G$. A simple situation occurs when $\Delta$ is a single non-diagonal $G_{\alpha}$-orbit; in this situation, the resulting digraph is quasi-strongly regular.

There are many open parameter sets for directed strongly regular graphs on up to 110 vertices on the website of A. Brouwer and S. Hobart (see \cite{dsrgbaza}). Using our construction, we obtain directed strongly regular graphs with parameters $(72,25,15,6,10)$, $(96,11,4,3,1)$, $(96,22,16,8,4)$, $(96,26,11,7,7)$, $(96,29,23,10,8)$, $(96,42,32,16,20)$, $(96,45,35,22,20)$ and $(165,60,36,23,21)$. To the best of our knowledge, all of these directed strongly regular graphs are new. 

The paper is organized as follows: In Section 2, we give basic definitions and properties of combinatorial designs and directed graphs used in the construction presented in this paper. We also describe the construction method of transitive 1-designs from a finite group described in \cite{dean1}. In Section 3, we present the construction of vertex-transitive directed regular graphs, analyze its single-orbit case, and describe the behavior of complements of the quasi-strongly regular digraphs obtained in this way. In Section 4, we prove the existence of directed strongly regular graphs with parameters $(72,25,15,6,10)$, $(96,11,4,3,1)$, $(96,22,16,8,4)$, $(96,26,11,7,7)$, $(96,29,23,10,8)$, $(96,42,32,16,20)$, $(96,45,35,22,20)$ and $(165,60,36,23,21)$. We also give information about all constructed directed strongly regular graphs, up to isomorphism. In Section 5, we describe directed quasi-strongly regular graphs constructed under the action of transitive permutation groups of degree $n\in\{3,\dots,30\}$ and primitive permutation groups of degree $n\in\{31,\dots,110\}$, considering only primitive groups of rank at most $30$. We also give information about all constructed directed quasi-strongly regular graphs, up to isomorphism.

\section{Preliminaries}\label{sec:1}
\subsection{$t$-designs}\label{subsec:1.1}
An incidence structure is an ordered triple $\mathcal{D}=(\mathcal{P},\mathcal{B},\mathcal{I})$, where $\mathcal{P}$ and $\mathcal{B}$ are non-empty, disjoint sets and $\mathcal{I}\subseteq \mathcal{P}\times \mathcal{B}$. The elements of the set $\mathcal{P}$ are called points, the elements of the set $\mathcal{B}$ are called blocks and $\mathcal{I}$ is called the incidence relation. If $|\mathcal{P}|=|\mathcal{B}|$, the incidence structure is called symmetric. The incidence matrix of an incidence structure is a $b\times v$ matrix $[m_{ij}]$ where $b$ and $v$ are the number of blocks and points respectively, so that $m_{ij}=1$ if the point $P_j$ and the block $x_i$ are incident, and $m_{ij}=0$ otherwise. An isomorphism from one incidence structure to another is a bijective mapping of points to points and from blocks to blocks that preserves the incidence. An isomorphism from an incidence structure $\mathcal{D}$ onto itself is called an automorphism of $\mathcal{D}$. The set of all automorphisms forms a group, the full automorphism group of $\mathcal{D}$, and is denoted by Aut$(\mathcal{D})$.

A $t-(v,k,\lambda)$ design is a finite incidence structure $\mathcal{D} = (\mathcal{P}, \mathcal{B}, \mathcal{I})$ that satisfies the following conditions: $\mathcal{P}$ has exactly $v$ elements, each element of $\mathcal{B}$ is incident with exactly $k$ elements of $\mathcal{P}$ and every $t$ elements of $\mathcal{P}$ are incident with exactly $\lambda$ elements of $\mathcal{B}$.

We assume that the reader is familiar with the basics of permutation group theory. For basic definitions and group theoretical terms, we refer the reader to \cite{dixon}. For the action of a group $G$ on a set $\Omega$  we will denote the image of $x\in \Omega$ for the action of $g\in G$ by $g.x$.

Let $G$ be a finite permutation group acting primitively on sets $\Omega_1$ and $\Omega_2$. In \cite{vedrana2} a construction of a 1-design with block set $\Omega_1$ and point set $\Omega_2$, with $G$ as automorphism group, was presented. This construction allows us to construct 1-designs that are not necessarily symmetric, and stabilizers of a point and a block are not necessarily conjugate. Using this construction, block designs and strongly regular graphs were constructed in \cite{vedrana1} and \cite{dean1}. The construction described in \cite{vedrana2} gives us all designs on which the group $G$ acts primitively on points and blocks.

A generalization of this construction is described in \cite{dean1} and is used to construct not necessarily primitive, but still transitive block designs.

\begin{thm}\label{tm_clanak}
	Let $G$ be a finite permutation group acting transitively on the sets $\Omega_1$ and $\Omega_2$ of size $m$ and $n$, respectively. Let $\alpha\in\Omega_1$ and $\Delta_2=\displaystyle\cup_{i=1}^{s}G_\alpha.\delta_i$, where $\delta_1,\dots,\delta_s\in\Omega_2$ are representatives of distinct $G_\alpha$-orbits. If $\Delta_2\ne \Omega_2$ and
	$\mathcal{B}=\{g.\Delta_2\  |\  g\in G\},$
	then $\mathcal{D}=(\Omega_2,\mathcal{B})$ is a $1$-$(n,|\Delta_2|,\frac{|G_\alpha|}{|G_{\Delta_2}|}\sum_{i=1}^{s}|G_{\delta_i}.\alpha|)$ design with $\frac{m\cdot |G_\alpha|}{|G_{\Delta_2}|}$ blocks. The group $H\cong G/\cap_{x\in\Omega_2}G_x$ acts as an automorphism group on $(\Omega_2,\mathcal{B})$, transitively on points and blocks of the design.
\end{thm}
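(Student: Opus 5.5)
The plan is to use the orbit--stabilizer theorem together with a double counting of flags. I will first fix the elementary facts. The block set $\mathcal{B}$ is the $G$-orbit of $\Delta_2$ under the induced action of $G$ on subsets of $\Omega_2$. Hence $G$ permutes $\mathcal{B}$ transitively, and $|\mathcal{B}|=|G:G_{\Delta_2}|$, where $G_{\Delta_2}$ denotes the setwise stabilizer. Since $\Delta_2$ is a union of $G_\alpha$-orbits, we have $G_\alpha\le G_{\Delta_2}$. Because $G$ is transitive on $\Omega_1$, $|G|=m|G_\alpha|$, and therefore
\[
|\mathcal{B}|=\frac{|G|}{|G_{\Delta_2}|}=\frac{m\,|G_\alpha|}{|G_{\Delta_2}|}.
\]
Every block $g.\Delta_2$ has size $|\Delta_2|=k$. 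The hypothesis $\Delta_2\neq\Omega_2$ only ensures that the blocks are proper subsets, so the design is nontrivial.

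Next I will show that every point lies in the same number $r$ of blocks. Since $g$ maps blocks containing $x$ bijectively onto blocks containing $g.x$, and $G$ is transitive on $\Omega_2$, the number $r$ is independent of the point. Double counting incident point--block pairs then gives $nr=bk$, so
\[
r=\frac{m|G_\alpha|}{|G_{\Delta_2}|}\cdot\frac{|\Delta_2|}{n}.
\]
It remains to rewrite this in the stated form. Write $|\Delta_2|=\sum_{i=1}^s|G_\alpha.\delta_i|=\sum_i |G_\alpha|/|G_\alpha\cap G_{\delta_i}|$, and $|G_{\delta_i}.\alpha|=|G_{\delta_i}|/|G_\alpha\cap G_{\delta_i}|$. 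Using $m=|G|/|G_\alpha|$ and $n=|G|/|G_{\delta_i}|$ (transitivity on $\Omega_2$), each summand gives
\[
\frac{m\,|G_\alpha.\delta_i|}{n}=\frac{|G_{\delta_i}|}{|G_\alpha\cap G_{\delta_i}|}=|G_{\delta_i}.\alpha|.
\]
Summing over $i$ yields $r=\frac{|G_\alpha|}{|G_{\Delta_2}|}\sum_{i}|G_{\delta_i}.\alpha|$. This identity $m|G_\alpha.\delta|=n|G_\delta.\alpha|$ is the only real computation. It is itself a double count of the orbital of $(\alpha,\delta)$ in $\Omega_1\times\Omega_2$, and I expect it to be the main point to get right.

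Finally, for the automorphism claim, $G$ acts on $\Omega_2$ with kernel $K=\cap_{x\in\Omega_2}G_x$. The kernel $K$ fixes every subset of $\Omega_2$, so it acts trivially on $\mathcal{B}$. Hence $H\cong G/K$ acts faithfully on points and preserves incidence, since $x\in g.\Delta_2$ implies $hx\in hg.\Delta_2$. Transitivity of $H$ on points and on blocks follows from the transitivity of $G$ established above.
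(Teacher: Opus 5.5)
The paper gives no proof of this theorem; it is quoted from \cite{dean1}, so there is no in-text argument to compare against. Your argument is correct and complete. The count $|\mathcal{B}|=|G:G_{\Delta_2}|=m|G_\alpha|/|G_{\Delta_2}|$ is right. So are the transitivity argument for a constant replication number, the flag count $nr=bk$, the conversion via $m|G_\alpha.\delta_i|=n|G_{\delta_i}.\alpha|$, and the handling of the kernel $\cap_{x\in\Omega_2}G_x$.

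For comparison, the stated form of the replication number also comes out of a per-point count, with no global identity needed:
\begin{itemize}
\item Since $G_\alpha\le G_{\Delta_2}$, the map $g.\alpha\mapsto g.\Delta_2$ from $\Omega_1$ onto $\mathcal{B}$ is well defined, and every fibre has size $|G_{\Delta_2}:G_\alpha|$.
\item For fixed $\delta\in\Omega_2$, one has $\delta\in g.\Delta_2$ if and only if $(g.\alpha,\delta)$ lies in one of the orbitals $G.(\alpha,\delta_i)$ of $G$ on $\Omega_1\times\Omega_2$. These orbitals are pairwise distinct.
\item By transitivity on $\Omega_2$, the number of $\beta\in\Omega_1$ with $(\beta,\delta)\in G.(\alpha,\delta_i)$ equals the number for $\delta=\delta_i$, namely $|G_{\delta_i}.\alpha|$.
\item Dividing $\sum_i|G_{\delta_i}.\alpha|$ by the fibre size gives $r$ directly.
\end{itemize}
That route explains the shape of the formula. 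The factor $|G_\alpha|/|G_{\Delta_2}|$ corrects for points of $\Omega_1$ that produce the same block, and each summand counts a $G_{\delta_i}$-orbit on $\Omega_1$. Your route instead replaces this orbital bookkeeping with the single identity $m|G_\alpha.\delta|=n|G_\delta.\alpha|$, which is the same orbital double count in compressed form.
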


\begin{rem}\label{napomena_paired}
	If a group $G$ acts transitively on $\Omega$, and $\Delta$ is an orbit of $G_\alpha$, then $\Delta^{'}=\{g.\alpha\ |\ g\in G,g^{-1}.\alpha\in\Delta\}$ is also an orbit of $G_\alpha$. $\Delta^{'}$ is the orbit of $G_\alpha$ paired with $\Delta$. It is obvious that $\Delta^{''}=\Delta$ and $|\Delta^{'}|=|\Delta|$. If $\Delta^{'}=\Delta$, then $\Delta$ is said to be self-paired. 
\end{rem}

Theorem \ref{tm_clanak} gives us the construction of transitive block designs. If $\Omega_1=\Omega_2$ and $\Delta_2$ is a union of self-paired and mutually-paired orbits of $G_\alpha$, then the design $\mathcal{D}$ is a symmetric self-dual design and the incidence matrix of this design is the adjacency matrix of a $|\Delta_2|$-regular graph.

\begin{cor}
If a group $G$ acts transitively on the points and the blocks of a 1-design $\mathcal{D}$, then $\mathcal{D}$ can be obtained as described in Theorem \ref{tm_clanak}, \textit{i.e.}, such that $\Delta_2$ is a union of $G_\alpha$-orbits.
\end{cor}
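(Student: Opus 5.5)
Given a 1-design $\mathcal{D}=(\mathcal{P},\mathcal{B})$ with $G\le \mathrm{Aut}(\mathcal{D})$ acting transitively on $\mathcal{P}$ and on $\mathcal{B}$, I will put $\Omega_1=\mathcal{B}$ and $\Omega_2=\mathcal{P}$ and reconstruct $\mathcal{D}$ from these two actions. Fix a block $\alpha\in\mathcal{B}$, and let $\Delta_2\subseteq\mathcal{P}$ be the set of points incident with $\alpha$. Here I regard a block as determined by its point set. If repeated blocks are allowed, I replace $\alpha$ by its point set and work with the set of distinct blocks; that set is still a single $G$-orbit. The first step is to show that $\Delta_2$ is a union of $G_\alpha$-orbits. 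This holds because $G$ preserves incidence: for $h\in G_\alpha$ and $x\in\Delta_2$, the point $h.x$ is incident with $h.\alpha=\alpha$, so $h.x\in\Delta_2$. Therefore $\Delta_2$ is $G_\alpha$-invariant, and so $\Delta_2=\cup_{i=1}^s G_\alpha.\delta_i$ for suitable representatives $\delta_i$ of distinct $G_\alpha$-orbits.

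The second step is to identify the block set. Since $G$ is transitive on $\mathcal{B}$, every block equals $g.\alpha$ for some $g\in G$. Its point set is $\{g.x : x\in\Delta_2\}=g.\Delta_2$, again because incidence is preserved. Hence $\mathcal{B}=\{g.\Delta_2 \mid g\in G\}$, which is exactly the block family produced by Theorem \ref{tm_clanak} for these $\Omega_1$, $\Omega_2$, $\alpha$ and $\Delta_2$. The hypothesis $\Delta_2\neq\Omega_2$ is needed there. It holds unless the design is trivial, with every block equal to the whole point set, and I will set that degenerate case aside explicitly.

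The only delicate point is bookkeeping rather than a real obstacle. Theorem \ref{tm_clanak} indexes blocks as sets $g.\Delta_2$, so the map $g.\alpha\mapsto g.\Delta_2$ must be well defined and bijective. It is well defined because the point set of $g.\alpha$ is $g.\Delta_2$. It is injective exactly when distinct blocks have distinct point sets, i.e.\ when the design is simple. For a design with repeated blocks, the construction recovers the underlying simple design; the multiplicity is constant over the single $G$-orbit, so the original design is a uniform multiple of it. With that remark, the incidence structures coincide, and the corollary follows.
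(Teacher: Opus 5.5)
Your proof is correct. The paper states this corollary without proof, but it proves the digraph analogue (the converse result following Theorem~\ref{glavni_tm}) in exactly your way: the neighbourhood of a fixed $\alpha$ is $G_\alpha$-invariant, hence a union of $G_\alpha$-orbits, and transitivity makes everything a $G$-translate of it. Your treatment of the hypothesis $\Delta_2\neq\Omega_2$ and of repeated blocks is extra care the paper leaves implicit; in particular, Theorem~\ref{tm_clanak} only ever produces the underlying simple design.
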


The construction used in this paper is a modification of the construction described in Theorem \ref{tm_clanak}. 

\subsection{Directed regular graphs}\label{subsec:1.2}

A directed graph or digraph $\mathcal{G}$ is an ordered pair $(\mathcal{V},\mathcal{A})$ of a non-empty finite set $\mathcal{V}(\mathcal{G})$, whose elements are called vertices, and a finite family $\mathcal{A}(\mathcal{G})$ of ordered pairs of elements of $\mathcal{V}(\mathcal{G})$ called arcs. For two distinct vertices $x$ and $y$ of $\mathcal{G}$, if $(x,y)\in\mathcal{A}$, we write $x\to y$, and say that $x$ is an \textit{in-neighbor} of $y$ or $y$ is an \textit{out-neighbor} of $x$. We say that vertices $x$ and $y$ are adjacent if there is an arc $x\to y$ or $y\to x$.  For two distinct vertices $x$ and $y$ of $\mathcal{G}$, if $(x,y)\in\mathcal{A}$ and $(y,x)\in\mathcal{A}$, we say that $x\leftrightarrow y$ is an (undirected) edge. The out-degree of vertex $x$ is defined as the number of arcs of the form $x\to y$, while the in-degree of vertex $y$ denotes the number of such arcs that are directed to $y$. A directed graph is $k$-regular if for each vertex the number of in- and out-neighbors is $k$. A path of length $l$ from $x$ to $y$ in $\mathcal{G}$ is a finite sequence of distinct vertices $(x=u_0,u_1,u_2,\dots,u_l=y)$ such that $(u_{i-1},u_i)\in\mathcal{A}$ for $i=1,2,\dots,l$. The digraphs in this paper will not have arcs from a vertex to itself.

We define a square $\{0, 1\}$-matrix $A = (a_{xy})$ labeled with the vertices of $\mathcal{G}$ such that $a_{xy} = 1$ if there is an arc from a vertex $x$ to a vertex $y$. The matrix $A$ is called the adjacency matrix of the digraph $\mathcal{G}$. A directed graph where for every arc $(x,y)\in\mathcal{A}$, $x\ne y$, there is an arc $(y,x)\in\mathcal{A}$ is an (undirected) graph.

\subsubsection{Directed strongly regular graphs}\label{subsubsec:1.2.1}

A strongly regular digraph with parameters $(n,k,t,\lambda,\mu)$, denoted DSRG$(n,k,t,\lambda,\mu)$, is a $k$-regular digraph on $n$ vertices such that each vertex is incident with $t$ undirected edges, for any two distinct vertices $x$, $y$ the number of paths of length 2 from $x$ to $y$ is $\lambda$ if $x\to y$ and $\mu$ otherwise.

Let $A$ be the adjacency matrix of a directed graph $\mathcal{G}$ with $n$ vertices. Then $\mathcal{G}$ is a directed strongly regular graph with parameters $(n,k,t,\lambda,\mu)$ if and only if $AJ_n=J_nA=kJ_n$ and $A^2=tI_n+\lambda A+\mu (J_n-I_n-A)$, where $I_n$ is the identity matrix of order $n$ and $J_n$ is the all-one matrix of order $n$.

It is easy to see that the directed strongly regular graphs with $t=k$ are strongly regular graphs. The complement of a directed strongly regular graph is again a directed strongly regular graph.
\begin{prop}(\cite{duval})
If $\mathcal{G}$ is a directed strongly regular graph with parameters $(n,k,t,\lambda,\mu)$, then the complementary graph $\bar{\mathcal{G}}$ is a directed strongly regular graph with parameters $(n,\bar{k},\bar{t},\bar{\lambda},\bar{\mu})$, where $\bar{k}=n-k-1$, $\bar{t}=n-2k+t-1$, $\bar{\lambda}=n-2k+\mu-2$, $\bar{\mu}=n-2k+\lambda$.
\end{prop}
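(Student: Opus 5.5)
The plan is to work with the matrix characterization stated just before the proposition: $\mathcal{G}$ is a DSRG$(n,k,t,\lambda,\mu)$ if and only if $AJ_n=J_nA=kJ_n$ and $A^2=tI_n+\lambda A+\mu(J_n-I_n-A)$. The complement $\bar{\mathcal{G}}$ has no loops and has adjacency matrix $\bar A=J_n-I_n-A$. It therefore suffices to verify the two matrix identities for $\bar A$ with the claimed parameters.

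First, regularity. Since $J_n^2=nJ_n$, we get
\[
\bar AJ_n=nJ_n-J_n-kJ_n=(n-k-1)J_n,
\]
and symmetrically $J_n\bar A=(n-k-1)J_n$. So $\bar{\mathcal{G}}$ is regular of degree $\bar k=n-k-1$.

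Second, expand $\bar A^2$. Using $J_nA=AJ_n=kJ_n$, we have
\[
\bar A^2=(J_n-I_n-A)^2=nJ_n-2J_n-2kJ_n+I_n+2A+A^2 .
\]
Substitute $A^2=tI_n+\lambda A+\mu(J_n-I_n-A)=(t-\mu)I_n+(\lambda-\mu)A+\mu J_n$. This expresses $\bar A^2$ as a linear combination $c_II_n+c_AA+c_JJ_n$ with
\[
c_I=1+t-\mu,\qquad c_A=2+\lambda-\mu,\qquad c_J=n-2-2k+\mu .
\]

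Third, match coefficients. The target is $\bar tI_n+\bar\lambda\bar A+\bar\mu(J_n-I_n-\bar A)$. Since $J_n-I_n-\bar A=A$, this equals
\[
(\bar t-\bar\lambda)I_n+(\bar\mu-\bar\lambda)A+\bar\lambda J_n .
\]
Comparing coefficients gives:
\begin{itemize}
\item $\bar\lambda=n-2k+\mu-2$;
\item $\bar\mu=c_A+\bar\lambda=n-2k+\lambda$;
\item $\bar t=c_I+\bar\lambda=n-2k+t-1$.
\end{itemize}
These agree with the stated values.

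Strictly speaking, sufficiency only needs the identity to hold, not uniqueness of the coefficients; the comparison above is just the way to find them. The only care needed is the bookkeeping of the $J_n$ terms, where commutativity of $A$ with $J_n$ comes from regularity. There is no real obstacle.

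If the authors also want the combinatorial meaning, it can be read off directly. For instance, $\bar t$ counts the vertices $y\neq x$ that are neither out- nor in-neighbours of $x$: there are $n-1-2k+t$ of them by inclusion–exclusion. This serves as a sanity check on $\bar t$.
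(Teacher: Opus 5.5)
Your proof is correct and takes essentially the same route as the paper: the paper only cites this proposition from \cite{duval}, but it proves the generalization to quasi-strongly regular digraphs (Theorem~\ref{tm_parameters}) by the same expansion of $(J-I-A)^2$, using $AJ=JA=kJ$ and substituting the expression for $A^2$. Setting $p=1$ in that theorem gives your coefficient matching, with $a'=\bar\mu$ and $b_1=\bar\lambda$.
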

By the eigenvalues of a directed strongly regular graph we mean the eigenvalues of its adjacency matrix. It is well known that a directed strongly regular graph has exactly three distinct eigenvalues. More precisely, a directed strongly regular graph with parameter set $(n,k,t,\lambda,\mu)$ has eigenvalues
$$
\theta_0=k,\qquad
\theta_{1,2}=\frac{1}{2}\left(\lambda-\mu\pm\sqrt{(\mu-\lambda)^2+4(t-\mu)}\right),
$$
with respective multiplicities
$$
m_0=1,\qquad
m_1=\frac{k+\theta_2(n-1)}{\theta_2-\theta_1},\qquad
m_2=\frac{k+\theta_1(n-1)}{\theta_1-\theta_2}.
$$

\subsubsection{Quasi-strongly regular digraphs}\label{subsubsec:1.2.2}

A quasi-strongly regular digraph $\mathcal{G}$ with parameters $(n,k,t,a; c_1,c_2,\dots, c_p)$, which we denote by \\
\noindent QSRD$(n,k,t,a; c_1,c_2,\dots, c_p)$, is a $k$-regular digraph on $n$ vertices such that each vertex is incident with $t$ undirected edges, for any two distinct vertices $x$, $y$ the number of paths of length 2 from $x$ to $y$ is $a$ if $x\to y$ and an element of $\{c_1,\ldots , c_p\}$ otherwise, and for each $i\in \{1,\ldots , p\}$ there are two distinct vertices $x\nrightarrow y$ such that the number of paths of length 2 from $x$ to $y$ is $c_i$. The parameter $p$ is called the grade of $\mathcal{G}$. Without loss of generality, we assume that $c_1 > c_2>\cdots > c_p$. Let $A$ be the adjacency matrix of a directed graph $\mathcal{G}$ with $n$ vertices. Then $\mathcal{G}$ is a quasi-strongly regular digraph with parameters $(n,k,t,a;c_1,\dots,c_p)$ if and only if $AJ_n=J_nA=kJ_n$ and $A^2=tI_n+aA+c_1C_1+c_2C_2+\cdots+c_pC_p$ for some non-zero $(0,1)$- matrices $C_1,C_2,\dots,C_p$ such that $C_1+C_2+\cdots+C_p=J_n-I_n-A$. 

A quasi-strongly regular digraph is a directed strongly regular graph if $p=1$ and it is a quasi-strongly regular graph if $t=k$.

\section{Construction}\label{sec:2}

In the following theorem we present a construction of vertex-transitive directed regular graphs.

\begin{thm}\label{glavni_tm}
	Let $G$ be a finite permutation group acting transitively on the set $\Omega$. Let $\alpha\in\Omega$ and let $\Delta=\displaystyle \cup_{i=1}^{s}G_\alpha.\delta_i$ be a union of orbits of the stabilizer $G_\alpha$ of $\alpha$, where $\delta_1,\dots,\delta_s$ are representatives of distinct $G_\alpha$-orbits. Let $T=\{g_1,\dots,g_t\}$ be a set of representatives of left cosets in $G/G_\alpha=\{g_1G_\alpha,\dots,g_tG_\alpha\}$. Let $\mathcal{V}=\{g_i.\alpha\ |\ i=1,\dots,t\}$ and let $\mathcal{A}=\{(g_i.\alpha,g_i.\beta)\ |\ i=1,\dots,t,\beta\in\Delta\}$. Then $\mathcal{G}=(\mathcal{V},\mathcal{A})$ is a directed graph on $|\Omega|$ vertices that is $|\Delta|$-regular and such that $g_i.\Delta$ is a set of out-neighbors of the vertex $g_i.\alpha$, $i=1,\dots,t$.
\end{thm}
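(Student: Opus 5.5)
The plan is to verify three things directly from the definitions: the vertex set is all of $\Omega$, the out-neighbourhood of each vertex is well defined and equals $g_i.\Delta$, and every in-degree equals $|\Delta|$. Since $G$ is transitive, the orbit--stabilizer correspondence $gG_\alpha\mapsto g.\alpha$ is a bijection $G/G_\alpha\to\Omega$. Hence $\mathcal{V}=\{g_1.\alpha,\dots,g_t.\alpha\}=\Omega$ with the $g_i.\alpha$ pairwise distinct, so $|\mathcal{V}|=t=|\Omega|$. I will also observe that the construction does not depend on the choice of coset representatives. If $g=g_ih$ with $h\in G_\alpha$, then $g.\Delta=g_i.(h.\Delta)=g_i.\Delta$, because $\Delta$ is a union of $G_\alpha$-orbits and is therefore $G_\alpha$-invariant. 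This is the key point, and the only place where the hypothesis on $\Delta$ enters.

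Next comes the out-neighbourhood. Because the $g_i.\alpha$ are distinct, the arcs starting at $g_i.\alpha$ are exactly the pairs $(g_i.\alpha,g_i.\beta)$ with $\beta\in\Delta$. Their set of heads is $g_i.\Delta$, and $\beta\mapsto g_i.\beta$ is injective, so the out-degree is $|\Delta|$. There are no loops: $G_\alpha$ fixes $\alpha$, so the orbit $\{\alpha\}$ is a $G_\alpha$-orbit, and I take $\Delta$ to consist of non-trivial orbits, i.e.\ $\alpha\notin\Delta$. Then $g_i.\alpha\notin g_i.\Delta$. I will state this assumption explicitly, since the paper excludes loops.

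For the in-degree, the cleanest route is to show that $G$ acts on $\mathcal{G}$ as a group of automorphisms and then invoke vertex-transitivity. The independence from representatives gives the arc set the intrinsic form $\mathcal{A}=\{(g.\alpha,g.\beta)\mid g\in G,\ \beta\in\Delta\}$. This set is visibly $G$-invariant, so every $x\in G$ maps arcs to arcs and preserves in-degrees. Transitivity on $\mathcal{V}=\Omega$ then forces all in-degrees to be equal. Counting arcs gives $|\Omega|\cdot|\Delta|=\sum_v \mathrm{indeg}(v)=|\Omega|\cdot\mathrm{indeg}$, so every in-degree is $|\Delta|$. Alternatively, one could read this off from Theorem \ref{tm_clanak} with $\Omega_1=\Omega_2=\Omega$: the replication number is the in-degree. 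The direct double-counting argument is simpler, though, and avoids the block-multiplicity bookkeeping.

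The main obstacle is the well-definedness step, namely that $g.\Delta$ depends only on the coset $gG_\alpha$, equivalently on the vertex $g.\alpha$. Once that is established, the $G$-invariance of $\mathcal{A}$ and the rest of the argument are routine.
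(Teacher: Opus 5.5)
Your proposal is correct and follows essentially the same route as the paper: $\mathcal{V}=\Omega$ by transitivity, the out-neighbourhood of $g_i.\alpha$ is $g_i.\Delta$, all in-degrees are equal because $G$ acts transitively on the vertices, and double counting $|\Omega|\cdot|\Delta|=|\Omega|\cdot s$ forces $s=|\Delta|$. Your explicit check that $g.\Delta$ depends only on the coset $gG_\alpha$, so that $\mathcal{A}$ is genuinely $G$-invariant, makes rigorous a step the paper uses only implicitly when it asserts that the vertices $g'h_i.\alpha$ are in-neighbours of $g'.y$.
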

\begin{proof} A group $G$ acts transitively on the set $\Omega$, so that for each $y\in\Omega$ there is an element of a transversal $T=\{g_1,\dots,g_t\}$ such that $y=g_i.\alpha$, $i=1,\dots,t$. Therefore $|\mathcal{V}|=|\Omega|$.
		
		A set $\Delta$ is the union of orbits of the stabilizer $G_\alpha$ for the action of $G$.  For $\beta\in\Delta$, $g.(\alpha,\beta)=(\alpha,g.\beta)\Leftrightarrow g\in G_\alpha$, so $\Delta$ is the set of all out-neighbors of a vertex $\alpha$.
		
		Let $x\in\mathcal{V}$, $x\ne \alpha$. Then there is a unique transversal element $g_i$ such that $x=g_i.\alpha$ and
		for each $\beta\in\Delta$ there is an arc $x\to y$, where $y=g_i.\beta$.
		So $g_i.\Delta$ is a set of out-neighbors of a vertex $x\ne \alpha$ and out-degree of $x$ is $|g_i.\Delta|=|\Delta|$.
		
		Assume that for $y\in \Omega$ there exist $h_1,\dots,h_s\in T$ such that $y=h_i.\beta$ for some $\beta\in\Delta$. Then the in-degree of vertex $y$ is equal to $s$, and the vertices $h_i.\alpha$, $i\in \{1,\ldots,s\}$, are in-neighbors of $y$. Due to transitivity, for every other vertex $y^{'}\ne y$ there exists $g^{'}\in G$ such that $y^{'}=g^{'}.y$, \textit{i.e.}, $g^{'}h_i.\alpha$, $i\in \{1,\ldots,s\}$, are in-neighbors of $y^{'}$. It follows that each  vertex has the in-degree $s$.

		If we count the 1s in the rows and columns of the adjacency matrix of the digraph, we get
		$|\Omega|\cdot |\Delta|=|\Omega|\cdot s\Rightarrow s=|\Delta|. $
\end{proof}
		A group $G$ acts transitively as an automorphism group on a set of vertices of a digraph and non-transitively on a set of arcs.

\begin{rem}
	\begin{enumerate}
		\item If $\Delta$ is a single orbit, the constructed digraph is arc-transitive (for more information see \cite{arctr}). 
		\item If $\Delta$ is a union of mutually paired suborbits and self-paired suborbits, then $\mathcal{G}$ is an undirected graph. Otherwise, $\mathcal{G}$ is a directed graph. For more information, we refer the reader to \cite{dixon}. 
		\item Since for $g\in G_\alpha$, $g.\Delta=\Delta$, it follows that $G_\alpha\subseteq G_\Delta$, where $G_\Delta$ is a setwise stabilizer of $\Delta$. Since $\Delta$ is a union of $G_\alpha$-orbits, $G_\Delta$ is a group and $G_\alpha\leq G_\Delta$. 
		\begin{itemize}
			\item If $G_\alpha=G_\Delta$, then the sets $g_i.\Delta$, $g_i\in T$, of out-neighbors of vertices $g_i.\alpha$, $i=1,\dots,t$, are all distinct.
			\item If $G_\alpha<G_\Delta$, then there exist vertices $g_i.\alpha$ and $g_j.\alpha$, $g_i, g_j \in T$, such that the sets $g_i.\Delta$ and $g_j.\Delta$ are equal.
		\end{itemize}
		\item For the action of a group $G$, a $G_\alpha$-orbit $\{\alpha\}$ is a self-paired orbit and is called diagonal. If we include the diagonal in the union $\Delta$, we obtain a digraph containing all the loops.
	\end{enumerate}
\end{rem}

Every directed regular graph for which a group $G$ acts transitively on a set of vertices of this digraph is isomorphic to a digraph constructed using Theorem \ref{glavni_tm}.

\begin{thm}
	If a group $G$ acts transitively as an automorphism group on a set of vertices of a directed regular graph $\mathcal{G}=(\mathcal{V},\mathcal{A})$, then there is such a set $\Omega$ that vertices and arcs of a digraph $\mathcal{G}$ are defined in the way described in Theorem \ref{glavni_tm}.
\end{thm}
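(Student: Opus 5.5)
We take $\Omega=\mathcal{V}$ with the given action of $G$, fix a vertex $\alpha\in\mathcal{V}$, and let $\Delta$ be the out-neighbourhood of $\alpha$, i.e. $\Delta=\{\beta\in\mathcal{V}\mid (\alpha,\beta)\in\mathcal{A}\}$. The goal is to check that $\Delta$ satisfies the hypotheses of Theorem \ref{glavni_tm} and that the digraph produced by that theorem is exactly $\mathcal{G}$. We assume, as the statement suggests, that $\mathcal{V}$ is acted on faithfully or at least that $G$ acts on $\mathcal{V}$ preserving $\mathcal{A}$, so $g.(x,y)=(g.x,g.y)\in\mathcal{A}$ whenever $(x,y)\in\mathcal{A}$.

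\textbf{Step 1: $\Delta$ is a union of $G_\alpha$-orbits.} If $\beta\in\Delta$ and $h\in G_\alpha$, then $(\alpha,\beta)\in\mathcal{A}$ gives $(h.\alpha,h.\beta)=(\alpha,h.\beta)\in\mathcal{A}$. Hence $h.\beta\in\Delta$, so $G_\alpha.\Delta=\Delta$. Therefore $\Delta=\cup_{i=1}^{s}G_\alpha.\delta_i$ for suitable representatives $\delta_i$. Since the digraphs considered have no loops, $\alpha\notin\Delta$, so the diagonal orbit is not included.

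\textbf{Step 2: the arcs coincide with those of Theorem \ref{glavni_tm}.} Take a left transversal $T=\{g_1,\dots,g_t\}$ of $G_\alpha$ in $G$. By transitivity, $\mathcal{V}=\{g_i.\alpha\}$, and the $g_i.\alpha$ are distinct, which gives the vertex set of Theorem \ref{glavni_tm}.

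For each $i$, the map $g_i$ is an automorphism, so it carries the out-neighbourhood of $\alpha$ bijectively onto the out-neighbourhood of $g_i.\alpha$. Concretely, $(g_i.\alpha,y)\in\mathcal{A}$ if and only if $(\alpha,g_i^{-1}.y)\in\mathcal{A}$, that is, if and only if $y\in g_i.\Delta$. Every arc of $\mathcal{G}$ has its tail equal to some $g_i.\alpha$. Hence
$$\mathcal{A}=\{(g_i.\alpha,g_i.\beta)\mid i=1,\dots,t,\ \beta\in\Delta\},$$
which is precisely the arc set of Theorem \ref{glavni_tm}.

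\textbf{Step 3: independence of the transversal.} If $g_iG_\alpha=gG_\alpha$, then $g.\Delta=g_i.\Delta$, because $G_\alpha$ fixes $\Delta$ setwise by Step 1. So the construction does not depend on the choice of $T$. Regularity comes for free from Theorem \ref{glavni_tm}, and the out-degree is $|\Delta|$.

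The only point needing care is the setup: identifying $\Omega$ with $\mathcal{V}$, and noting that if $G$ does not act faithfully, the kernel only affects the stabilizers, not the argument. No step is a genuine obstacle. Step 2 is the crux, and it is a direct consequence of $G$ preserving $\mathcal{A}$.
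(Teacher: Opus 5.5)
Your proof is correct and takes the same approach as the paper: set $\Omega=\mathcal{V}$, let $\Delta$ be the out-neighbourhood of a fixed vertex $\alpha$, show that $\Delta$ is $G_\alpha$-invariant and hence a union of $G_\alpha$-orbits, and use $\mathcal{V}=G.\alpha$. Your Step 2, which checks that $(g_i.\alpha,y)\in\mathcal{A}$ exactly when $y\in g_i.\Delta$, makes explicit the identification of the arc set, which the paper leaves implicit.
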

\begin{proof}
Let $\alpha$ be a vertex of a digraph $\mathcal{G}$ and let $\Delta$ be a set of out-neighbors of $\alpha$.

Let $\delta_1\in\Delta$. Consider the action of a group $G$ on a vertex $\delta_1$. The orbit $G_\alpha.\delta_1$ is contained in the set $\Delta$, because for $g\in G_\alpha$ it holds that $g.(\alpha,\delta_1)=(\alpha,g.\delta_1)$. If $G_\alpha.\delta_1\ne \Delta$, then $G_\alpha.\delta_2\subseteq \Delta$, for some vertex $\delta_2\in\Delta$ that is not contained in the orbit $G_\alpha.\delta_1$. Therefore, a set $\Delta$ is the union of $G_\alpha$- orbits, \textit{i.e.} $\Delta=\displaystyle\cup_{i=1}^{s}G_\alpha.\delta_i$, for some $s\in\mathbf{N}$.

A group $G$ acts transitively on the set of vertices of a digraph $\mathcal{G}$ so $\mathcal{V}=G.\alpha$.
\end{proof}

\subsection{The single-orbit case}\label{subsec:2.1}

Although Theorem~\ref{glavni_tm} allows arbitrary unions of suborbits, the case in which
$\Delta$ is a single suborbit is a direct instance of the association-scheme construction of
quasi-strongly regular digraphs. Indeed, the orbitals of a transitive permutation group form
a Schurian association scheme (see \cite{as}), and Theorem~3.10 of \cite{quasi} applies to each
non-diagonal orbital. We record the corresponding statement in the notation used here.

\begin{cor}\label{cor_single_orbit_qsrd}
Let $G$ be a finite permutation group acting transitively on a set $\Omega$, let
$\alpha\in\Omega$, and let $\Delta=G_\alpha.\delta$ be a single non-diagonal
$G_\alpha$-orbit. Let $\mathcal{G}=(\Omega,\mathcal{A})$ be the digraph obtained by
Theorem~\ref{glavni_tm}. Let
$\Delta_0=\{\alpha\},\Delta_1=\Delta,\Delta_2,\dots,\Delta_{r-1}$ be all distinct
$G_\alpha$-orbits on $\Omega$, and choose representatives $\delta_j\in\Delta_j$,
$j=0,1,\dots,r-1$. Let $\Delta'$ be the orbit paired with $\Delta$, and put
$$
t=|\Delta\cap\Delta'|.
$$
For $j=1,\dots,r-1$, define
$$
m_j=\bigl|\{z\in\Omega\mid \alpha\to z\to \delta_j\}\bigr|.
$$
If $c_1>\cdots>c_p$ are the distinct values among $m_2,\dots,m_{r-1}$, then
$\mathcal{G}$ is a directed quasi-strongly regular graph with parameters
$(|\Omega|,|\Delta|,t,m_1;c_1,\dots,c_p)$.
\end{cor}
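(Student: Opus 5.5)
The plan is to verify the defining matrix conditions for a QSRD directly, using the fact that $G$ acts as an automorphism group of $\mathcal{G}$. First I will note that in the construction of Theorem~\ref{glavni_tm} the arc set is exactly the orbital $\{(g.\alpha,g.\beta)\mid g\in G,\beta\in\Delta\}$. The reason is that $g=g_ih$ with $h\in G_\alpha$, and $h.\Delta=\Delta$. Hence $G$ preserves $\mathcal{A}$. Regularity of degree $|\Delta|$ is already given by Theorem~\ref{glavni_tm}. Since $\Delta$ is non-diagonal, there are no loops.

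Next I will identify $t$. A vertex $x=g.\alpha$ lies on an undirected edge with $y$ exactly when $x\to y$ and $y\to x$. By transitivity it suffices to take $x=\alpha$. Then $y\in\Delta$, and $y\to\alpha$ means $y=g.\alpha$ with $g^{-1}.\alpha\in\Delta$. By Remark~\ref{napomena_paired} this says $y\in\Delta'$. So the number of undirected edges at each vertex is $|\Delta\cap\Delta'|=t$. Since $\Delta$ and $\Delta'$ are both $G_\alpha$-orbits, this is either $0$ or $|\Delta|$, but we do not need that.

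The main step is to show that the number of $2$-paths from $x$ to $y$ depends only on the orbital containing $(x,y)$. Let $N(x,y)=|\{z\mid x\to z\to y\}|$. Because $G$ preserves arcs, we have $N(g.x,g.y)=N(x,y)$. For any pair $x\neq y$, choose $g$ with $g.x=\alpha$. Then $g.y$ lies in some non-diagonal $G_\alpha$-orbit $\Delta_j$. Choosing $h\in G_\alpha$ with $hg.y=\delta_j$ gives $N(x,y)=m_j$. The pairs with $x\to y$ are exactly those whose image lies in $\Delta_1=\Delta$, so for them the count is the constant $a=m_1$. For non-adjacent pairs the count is some $m_j$ with $j\ge 2$. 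Conversely, each $\Delta_j$ with $j\ge2$ is nonempty and realizes the value $m_j$ through the pair $(\alpha,\delta_j)$. Hence the set of values attained on non-arcs is exactly $\{c_1,\dots,c_p\}$, and every $c_i$ occurs.

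Finally, I will assemble the matrix identity. Define $C_i$ as the $(0,1)$-matrix of the union of the orbitals $\{(g.\alpha,g.\delta_j)\}$ with $m_j=c_i$. Then $A^2=tI+m_1A+\sum c_iC_i$, where the diagonal entry $(A^2)_{xx}$ counts the $z$ with $x\to z\to x$, which is $t$. We also have $\sum C_i=J-I-A$ and each $C_i\neq 0$. Together with $AJ=JA=|\Delta|J$, this gives the claimed parameters.

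The only delicate point is the bookkeeping that non-adjacent ordered pairs correspond exactly to the orbitals indexed by $j\ge2$. This follows because the orbitals partition $\Omega\times\Omega$. Alternatively, one could cite Theorem~3.10 of \cite{quasi} for Schurian schemes, but I prefer the direct argument above.
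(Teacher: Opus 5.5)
Your proof is correct, but it takes a more elementary route than the paper. The paper identifies the arc set with the orbital $R_1$ and then invokes Theorem~3.10 of \cite{quasi}. That theorem says any non-diagonal relation of an association scheme is a quasi-strongly regular digraph whose parameters are the intersection numbers $p^0_{11}$, $p^1_{11}$ and the distinct values of $p^j_{11}$, $j\ge 2$. The paper's proof then only translates these numbers into $t$, $m_1$, $m_j$.

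You instead verify the definition directly. $G$-invariance of $\mathcal{A}$ gives $N(g.x,g.y)=N(x,y)$, so the number of $2$-paths is constant on each orbital. You then do the bookkeeping by hand:
\begin{itemize}
\item the diagonal entry is $|\Delta\cap\Delta'|$, via Remark~\ref{napomena_paired};
\item arcs correspond to $\Delta_1$;
\item non-arcs correspond to $\Delta_j$ with $j\ge 2$;
\item each $c_i$ is attained at some pair $(\alpha,\delta_j)$.
\end{itemize}
In effect you reprove the Schurian case of the cited theorem.

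The paper's version is shorter and places the result in the general association-scheme setting, where it also covers non-Schurian schemes. Yours is self-contained and makes explicit two points the paper passes over. First, the transversal-based arc set of Theorem~\ref{glavni_tm} really is the whole orbital, which your $g=g_ih$ argument shows. Second, each value $c_i$ is actually realized on a non-adjacent pair, which is what makes the grade exactly $p$.

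Your side remark $t\in\{0,|\Delta|\}$ is also correct. It shows that a single-suborbit construction gives either an oriented or an undirected graph. Hence the directed strongly regular graphs with $0<t<k$ in Section~\ref{sec:3} necessarily come from unions of suborbits.
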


\begin{proof}
For $j=0,1,\dots,r-1$, let
$$
R_j=\{(g.\alpha,g.\delta_j)\mid g\in G\}.
$$
The relations $R_0,R_1,\dots,R_{r-1}$ are the orbitals of the action of $G$ on
$\Omega\times\Omega$ and form the associated Schurian association scheme. Since
$\Delta=\Delta_1$, the arc set of $\mathcal{G}$ is precisely $R_1$. The assertion now
follows from Theorem~3.10 of \cite{quasi}. In this notation, the corresponding
intersection numbers are
$$
p^0_{11}=|\Delta\cap\Delta'|=t,\qquad p^1_{11}=m_1,\qquad p^j_{11}=m_j
\quad \text{for } j=2,\dots,r-1,
$$
which gives the stated parameters.
\end{proof}

In particular, the grade-one case gives directed strongly regular graphs. Namely, if $m_2=m_3=\cdots=m_{r-1}=\mu$, then $\mathcal{G}$ is a directed strongly regular graph with parameters
$(|\Omega|,|\Delta|,t,m_1,\mu)$.

\subsection{The complement of a directed quasi-strongly regular graph}\label{subsec:2.2}

The complement of a directed quasi-strongly regular graph need not be quasi-strongly regular. Nevertheless, in the present construction the complement is obtained by the same group action, simply by replacing the chosen set of suborbits with its complement in $\Omega\setminus\{\alpha\}$.

\begin{rem}
 Let a group $G$ act transitively on a set $\Omega$ and let $\Delta_0=\{\alpha\}$, $\Delta_1,\dots,\Delta_{r-1}$ be $G_\alpha$-orbits for that action. Let $\mathcal{G}$ be a directed quasi-strongly regular graph constructed by the method described in Theorem \ref{glavni_tm}. If a set $\displaystyle\cup_{i\in S}\Delta_i$, $S\subseteq\{1,\dots,r-1\}$, is the set of out-neighbors of a vertex $\alpha\in\Omega$ of the directed quasi-strongly regular graph, then a union $\Delta=\displaystyle\cup_{j=1}^{r-1} \Delta_j\backslash \displaystyle\cup_{i\in S}\Delta_i$ is a set of out-neighbors of a vertex $\alpha$ of the complement of the directed quasi-strongly regular graph.
\end{rem}

The complement of a directed quasi-strongly regular graph is not necessarily a directed quasi-strongly regular graph. As a consequence of Theorem \ref{tm_parameters}, which we prove here, we conclude the following.

The complement of a QSRD$(n,k,t,a;c_1,\dots,c_p)$ is a directed $k'$-regular graph on $n$ vertices, for which the following holds:
\begin{itemize}
	\item[(i)] each vertex is incident to $t'$ undirected edges,
	\item[(ii)] there exists a number $a'$ such that for every two distinct vertices $x$ and $y$, such that $x\not\to y$, the number of paths of length two from $x$ to $y$ is $a'$,
	\item[(iii)] there exist such numbers $b_i$ that for every two vertices $x$ and $y$, such that $x\to y$, the number of paths of length two from $x$ to $y$ is $b_i$, where $1\leq i \leq p$,
	\item[(iv)] for each $1\leq i \leq p$ there are distinct vertices $x$ and $y$, such that $x\to y$, so that the number of paths of length two from $x$ to $y$ is equal to $b_i$.
\end{itemize}
We will denote a directed graph with these properties by QSRD$^C(n,k',t',a';b_1,\dots,b_p)$. We call a non-negative integer $p$ the grade of this digraph. Without loss of generality, we assume that $k'>0$ and $b_1>b_2>\cdots > b_p$. 

For $t'=k'$ we obtain an undirected graph.

\begin{thm}\label{tm_parameters}
	If $\mathcal{G}$ is a QSRD$(n,k,t,a;c_1,\dots,c_p)$ with adjacency matrix $A$, then the complementary graph $\mathcal{G}'$ with adjacency matrix $A'=J-I-A$ is a directed graph QSRD$^C(n,k',t',a';b_1,\dots,b_p)$ and
	$$A'J=JA'=(n-k-1)J,$$ $$ (A')^2=t'I+a'(J-I-A')+b_1C_1+\cdots+b_pC_p,$$
	where $C_1+\cdots+C_p=J-I-A=A'$ and the parameters are
	$$k'=(n-2k)+(k-1),\: t'=(n-2k)+(t-1),\: a'=(n-2k)+a,$$
	$$b_1=(n-2k)+(c_1-2),\: b_2=(n-2k)+(c_2-2),\dots,b_p=(n-2k)+(c_p-2).$$
\end{thm}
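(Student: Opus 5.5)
Our plan is to compute directly with $A'=J-I-A$, using only $AJ=JA=kJ$ and the defining identity $A^2=tI+aA+c_1C_1+\cdots+c_pC_p$ with $C_1+\cdots+C_p=J-I-A$. First, $A'J=J^2-J-AJ=nJ-J-kJ=(n-k-1)J$, and likewise $JA'=(n-k-1)J$, so $\mathcal{G}'$ is $k'$-regular with $k'=n-k-1=(n-2k)+(k-1)$. Since $A'$ is a $(0,1)$-matrix with zero diagonal, this also shows $\mathcal{G}'$ is a regular digraph in the sense of Section \ref{subsec:1.2}.

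Next we expand the square. Because $I$ and $J$ commute with $A$, we get
$$(A')^2=J^2-2J-2JA+I+2A+A^2=(n-2-2k)J+I+2A+A^2.$$
Substituting $A^2$ gives
$$(A')^2=(n-2k-2)J+(t+1)I+(a+2)A+\sum_{i=1}^p c_iC_i .$$
We then rewrite this in the basis $I$, $A=J-I-A'$, and $C_i$. We write $J=I+A+\sum_i C_i$, which is legitimate because $\sum_i C_i=J-I-A=A'$. This yields
$$(A')^2=(n-2k-1+t)I+(n-2k+a)A+\sum_{i=1}^p\bigl(n-2k-2+c_i\bigr)C_i .$$
Reading off coefficients, we obtain $t'=(n-2k)+(t-1)$, $a'=(n-2k)+a$ multiplying $J-I-A'=A$, and $b_i=(n-2k)+(c_i-2)$, exactly as claimed.

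The remaining step is to interpret this identity combinatorially, and we expect it to be the main obstacle. The matrices $I$, $A$, $C_1,\dots,C_p$ are nonzero $(0,1)$-matrices with disjoint supports covering all entries. Hence the diagonal entry $t'$ counts the closed 2-walks at a vertex, that is, the undirected edges at that vertex, giving property (i). The off-diagonal entry at $(x,y)$ equals $a'$ when $x\not\to y$ in $\mathcal{G}'$, i.e.\ when $A_{xy}=1$, giving (ii). It equals $b_i$ on the support of $C_i$, which consists precisely of the arcs of $\mathcal{G}'$, giving (iii). Property (iv) follows because each $C_i$ is nonzero. The values $b_i$ are distinct and ordered $b_1>\cdots>b_p$, since they are the $c_i$ shifted by the common constant $n-2k-2$.

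One should also verify that paths of length two agree with entries of $(A')^2$ off the diagonal; this holds because there are no loops, so a 2-walk between distinct vertices has distinct vertices. It remains to check that $k'>0$ in the convention, which is immediate whenever $\mathcal{G}$ is not complete. This completes the plan.
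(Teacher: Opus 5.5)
Your proposal is correct and matches the paper's proof: both expand $(J-I-A)^2$ directly using $AJ=JA=kJ$ and the identity for $A^2$, then substitute $J=I+A+\sum_i C_i$ to read off $t'$, $a'$ and the $b_i$. Your final paragraph, which derives properties (i)--(iv) from the entries of $(A')^2$ and notes that 2-walks between distinct vertices are paths, makes explicit what the paper leaves implicit.
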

\begin{proof}
Adjacency matrix $A$ of a directed quasi-strongly regular graph with parameters $(n,k,t,a;c_1,\dots,c_p)$ satisfies:
$$	AJ=JA=kJ,\label{f1}$$	$$A^2=tI+aA+c_1C_1+\cdots+c_pC_p, \label{f2}$$
where $C_1+\cdots+C_p=J-I-A.$
Adjacency matrix of a complementary graph of that digraph is $A'=J-I-A$. 

It follows:
$$A'J=(J-I-A)J=J^2-I\cdot J-A\cdot J=nJ-J-kJ=(n-k-1)J,$$
$$JA'=J(J-I-A)=J^2-J\cdot I-J\cdot A=nJ-J-kJ=(n-k-1)J,$$

\noindent and 

$$(A')^2=(J-I-A)^2$$ $$		=(n-2k+t-1)I+(n-2k+a)A+(n-2k-2)(J-I-A)+c_1C_1+\cdots+c_pC_p$$
	$$	=(n-2k+t-1)I+(n-2k+a)(J-I-A')+(n-2k-2+c_1)C_1+\cdots+
	 (n-2k-2+c_p)C_p,$$
where $C_1+\cdots+C_p=J-I-A=A'$. 
\end{proof}

\section{New directed strongly regular graphs}\label{sec:3}

In this section we give examples of the construction method and prove the existence of directed strongly regular graphs with parameters $(72,25,15,6,10)$, $(96,11,4,3,1)$, $(96,22,16,8,4)$,\\
\noindent $(96,26,11,7,7)$, $(96,29,23,10,8)$, $(96,42,32,16,20)$, $(96,45,35,22,20)$ and $(165,60,36,23,21)$. 

For the explicit search for new directed strongly regular graphs, it is natural to look for permutation groups whose actions are rich enough to produce several essentially different suborbit unions, but still sufficiently structured to make the orbit calculations and the verification of the directed strongly regular conditions computationally feasible. 

In all computations we used GAP \cite{GAP2022} together with the package Digraphs \cite{digraphs}. In our search we considered transitive permutation representations of the groups $GO^{\pm}(4,3)$ and $SO^{\pm}(4,3)$. The search was restricted to representations of rank at most 30 and, except for $SO^{+}(4,3)$, to permutation representations of degree $v\leq 110$. The group $SO^+(4,3)$ is a special case in this search: among its transitive permutation representations of rank at most 30, the largest degree that occurs is 96. These restrictions were imposed both to keep the computational search feasible and to focus on the range of orders covered by Brouwer's tables of feasible parameters for directed strongly regular graphs. In addition, we performed a separate search using the Mathieu group $M_{11}$. For this group we considered transitive permutation representations of rank at most 30, without imposing a restriction on the degree $v$.

For each group \(G\) in Table~1, we consider a transitive permutation representation
\(P\) of \(G\) on a set \(\Omega\) of size \(n\), where \(n\) is the number of vertices of the
constructed digraph. Fixing a point \(\alpha \in \Omega\), we compute the suborbits of
the point stabilizer \(G_\alpha\), and the directed strongly regular graphs listed below
are obtained, as in Theorem~3.1, from suitable unions of non-diagonal $G_\alpha$-orbits. In total, we obtain $26$ pairwise non-isomorphic directed strongly regular graphs, namely $\mathcal{G}_1,\dots,\mathcal{G}_{26}$, with eight distinct parameter sets.

\begin{table}[h!]\scriptsize
\centering
\caption{New directed strongly regular graphs arising from orthogonal and Mathieu group actions}\label{tab:1}
\begin{tabular}{cccccc}
\hline\noalign{\smallskip}
$G$& $P$ & $G_\alpha$& Parameters & $\#$-nonisom. & Aut$\mathcal{G}$ \\
\noalign{\smallskip}\hline\noalign{\smallskip}
$GO^{+}(4,3)$&$({A}_{4}^{2}:2):2)$&$D_4$&$(72,25,15,6,10)$& 2& $(((2^2\times(2^4:3)):2):3):2$\\
\cline{2-6}
&$(((2^4:3):2):3):2$&$S_3$&$(96,11,4,3,1)$& 4& $({A}_{4}^{2}:2):2$ (2), \\
&&&& & $((((({A}_{4}^{2}:2)\times A_4):2):3):2):2$ (2)\\
\cline{2-6}
&$({A}_{4}^{2}:2):2)$&$C_6$&$(96,22,16,8,4)$ & 2& $(((2^3:2^2):3^2):2):2$\\
\cline{2-6}
&$(((2^3:2^2):3^2):2):2$&$D_6$&$(96,22,16,8,4)$& 2 & $2\times((((2^4:3):2):3):2)$ \\
\cline{2-6}
&$({A}_{4}^{2}:2):2)$&$S_3$&$(96,26,11,7,7)$ & 8 & $({A}_{4}^{2}:2):2$\\
\cline{2-6}
&$(((2^4:3):2):3):2$&$S_3$&$(96,42,32,16,20)$& 2 & $({A}_{4}^{2}:2):2$\\
\cline{2-6}
&$(((2^4:3):2):3):2$&$S_3$&$(96,45,35,22,20)$&2& $({A}_{4}^{2}:2):2$\\
\hline
$SO^{+}(4,3)$&$((2^3:2^2):3^2):2$&$S_3$&$(96,29,23,10,8)$ & 2 & $((2^3:2^2):3^2):2$\\
\hline
$M_{11}$&$M_{11}$&$GL(2,3)$&$(165,60,36,23,21)$ & 2& $M_{11}$\\
\noalign{\smallskip}\hline
\end{tabular}
\end{table}

For each directed strongly regular graph listed in Table~1, its complementary digraph is again a directed strongly regular graph; moreover, the complements are pairwise non-isomorphic whenever the corresponding original digraphs are, and the full automorphism group of each complement is isomorphic to that of the corresponding constructed digraph.

All transitive permutation representations used in the search, together with the corresponding suborbits and the adjacency matrices of all newly constructed directed strongly regular graphs, are available at the following link:
\begin{center}
\url{https://www.math.uniri.hr/~matea.zubovic/dsrg/new_DSRGs.html}
\end{center}

\subsection{Directed strongly regular graphs from groups $GO^{\pm}(4,3)$}\label{subsec:3.1}

The general orthogonal groups $GO^{+}(4,3)$ and $GO^{-}(4,3)$ over $\mathbb{F}_3$ are small classical groups, and the distinction between the plus and minus type leads to related but genuinely different orbit structures. Our search in the corresponding action of $GO^{\pm}(4,3)$ also produced several directed strongly regular graphs with previously known parameter sets, so we do not describe them in detail here. Digraphs with parameters $(72,25,15,6,10)$, $(96,11,4,3,1)$, $(96,22,16,8,4)$, $(96,26,11,7,7)$, $(96,42,32,16,20)$ and $(96,45,35,22,20)$ are obtained from the group $GO^{+}(4,3)$. To the best of our knowledge, these directed strongly regular graphs have not appeared previously in the literature. No new directed strongly regular graphs were obtained from the group $GO^{-}(4,3)$.

	\subsubsection{DSRG(72,25,15,6,10)}\label{subsubsec:3.1.2}

For the degree $72$ representation of $GO^{+}(4,3)$, applying
Theorem~\ref{glavni_tm} to the unions
$\Delta_{\mathcal{G}_1}=
\bigcup_{i\in I}\Delta_i$, $I=\{1,4,9,10,14,15,17,19,21\}$ and $\Delta_{\mathcal{G}_2}=
\bigcup_{j\in J}\Delta_j$, $J=\{1,4,9,10,14,15,17,19,20\}$
produces two non-isomorphic directed strongly regular graphs with parameters
$(72,25,15,6,10)$.

\subsubsection{DSRG(96,11,4,3,1)}\label{subsubsec:3.1.3}

For the degree $96$ representation of $GO^{+}(4,3)$ of rank $30$, the unions
$\Delta_{\mathcal{G}_3}=\displaystyle\bigcup_{i\in I}\Delta_i$, $I=\{1,2,7,16,21\}$, $\Delta_{\mathcal{G}_4}=\displaystyle\bigcup_{j\in J}\Delta_j$, $J=\{1,2,9,11,21\}$, $\Delta_{\mathcal{G}_5}=\displaystyle\bigcup_{k\in K}\Delta_k$, $K=\{1,2,11,14,23\}$ and $\Delta_{\mathcal{G}_6}=\displaystyle\bigcup_{l\in L}\Delta_l$, $L=\{1,2,14,16,23\}$ give four pairwise non-isomorphic directed strongly regular graphs with parameters $(96,11,4,3,1)$.

\subsubsection{DSRG(96,22,16,8,4)}\label{subsubsec:3.1.4}

The parameter set $(96,22,16,8,4)$ is obtained from two degree $96$ representations of
$GO^{+}(4,3)$. For the first representation, the unions $\Delta_{\mathcal{G}_7}=\displaystyle\bigcup_{i\in I}\Delta_i$, $I=\{2,3,6,15,16\}$ and $\Delta_{\mathcal{G}_8}=\displaystyle\bigcup_{j\in J}\Delta_j$, $J=\{2,3,6,15,17\}$
give two directed strongly regular graphs. For the second
representation, the unions
$\Delta_{\mathcal{G}_{9}}=\displaystyle\bigcup_{k\in K}\Delta_k$, $K=\{2,3,7,10,11\}$ and $\Delta_{\mathcal{G}_{10}}=\displaystyle\bigcup_{l\in L}\Delta_l$, $L=\{2,3,7,10,13\}$
give two further examples. Hence, we obtain four pairwise non-isomorphic directed
strongly regular graphs with parameters $(96,22,16,8,4)$.	

\subsubsection{DSRG(96,26,11,7,7)}\label{subsubsec:3.1.5}

In another degree $96$ representation of $GO^{+}(4,3)$, the following eight unions of suborbits give directed strongly regular graphs:\\
\begin{minipage}{0.48\textwidth}
$\Delta_{\mathcal{G}_{11}}=\displaystyle\bigcup_{i\in I}\Delta_i$, $I=\{1,2,3,4,7,9,17,21\}$,

$\Delta_{\mathcal{G}_{12}}=\displaystyle\bigcup_{j\in J}\Delta_j$, $J=\{1,2,3,4,7,9,16,19\}$,

$\Delta_{\mathcal{G}_{13}}=\displaystyle\bigcup_{k\in K}\Delta_k$, $K=\{1,2,3,4,7,9,13,17\}$,

$\Delta_{\mathcal{G}_{14}}=\displaystyle\bigcup_{l\in L}\Delta_l$, $L=\{1,2,3,4,7,9,13,16\}$,
\end{minipage}
\hfill
\begin{minipage}{0.48\textwidth}
$\Delta_{\mathcal{G}_{15}}=\displaystyle\bigcup_{m\in M}\Delta_m$, $M=\{1,2,3,4,7,8,17,21\}$,

$\Delta_{\mathcal{G}_{16}}=\displaystyle\bigcup_{r\in R}\Delta_r$, $R=\{1,2,3,4,7,8,16,19\}$,

$\Delta_{\mathcal{G}_{17}}=\displaystyle\bigcup_{s\in S}\Delta_s$, $S=\{1,2,3,4,7,8,14,17\}$, 

$\Delta_{\mathcal{G}_{18}}=\displaystyle\bigcup_{t\in T}\Delta_t$, $T=\{1,2,3,4,7,8,14,16\}$.
\end{minipage}\\
\noindent These unions produce eight pairwise non-isomorphic directed strongly regular graphs with parameters $(96,26,11,7,7)$.

\subsubsection{DSRG(96,42,32,16,20)}\label{subsubsec:3.1.6}

For the degree $96$ representation of $GO^{+}(4,3)$ of rank $30$, the unions $\Delta_{\mathcal{G}_{19}}=\displaystyle\bigcup_{i\in I}\Delta_i$, $I=\{1,\allowbreak2,\allowbreak3,\allowbreak6,\allowbreak10,\allowbreak13,\allowbreak14,\allowbreak17,\allowbreak18,\allowbreak20,\allowbreak24,\allowbreak26,\allowbreak28\}$ and $\Delta_{\mathcal{G}_{20}}=\displaystyle\bigcup_{j\in J}\Delta_j$, $J=\{1,\allowbreak2,\allowbreak3,\allowbreak9,\allowbreak10,\allowbreak13,\allowbreak17,\allowbreak18,\allowbreak19,\allowbreak20,\allowbreak24,\allowbreak26,\allowbreak28\}$, give two non-isomorphic directed strongly regular graphs
with parameters $(96,42,32,16,20)$.

\subsubsection{DSRG(96,45,35,22,20)}\label{subsubsec:3.1.7}

For the degree $96$ representation of $GO^{+}(4,3)$ of rank $30$, the unions $\Delta_{\mathcal{G}_{21}}=\displaystyle\bigcup_{i\in I}\Delta_i$, $I=\{1,\allowbreak2,\allowbreak3,\allowbreak6,\allowbreak10,\allowbreak11,\allowbreak13,\allowbreak14,\allowbreak16,\allowbreak17,\allowbreak18,\allowbreak20,\allowbreak22,\allowbreak24,\allowbreak26\}$ and $\Delta_{\mathcal{G}_{22}}=\displaystyle\bigcup_{j\in J}\Delta_j$, $J=\{1,\allowbreak2,\allowbreak3,\allowbreak9,\allowbreak10,\allowbreak11,\allowbreak13,\allowbreak16,\allowbreak17,\allowbreak18,\allowbreak19,\allowbreak20,\allowbreak22,\allowbreak24,\allowbreak26\}$, give two non-isomorphic directed strongly regular graphs
with parameters $(96,45,35,22,20)$.

\subsection{Directed strongly regular graphs from group $SO^{+}(4,3)$}\label{subsec:3.2}

In addition to the groups $GO^{\pm}(4,3)$, we also considered the corresponding special orthogonal groups $SO^{\pm}(4,3)$, since they naturally arise as index-two subgroups of $GO^{\pm}(4,3)$. Although these groups are slightly smaller, their permutation actions still exhibit a rich suborbit structure. In particular, the actions of $SO^{\pm}(4,3)$ provide transitive permutation representations whose point stabilizers produce several non-trivial suborbits, making them suitable candidates for constructing directed strongly regular graphs via unions of suborbits.

The group $SO^{-}(4,3)$ with rank at most 30 and $v\leq 110$ did not yield any new directed strongly regular graphs. However, the group $SO^{+}(4,3)$ produced directed strongly regular graphs with parameters $(96,29,23,10,8)$. To the best of our knowledge, these directed strongly regular graphs are new.

\subsubsection{DSRG(96,29,23,10,8)}\label{subsubsec:3.2.1}

For the degree $96$ representation of $SO^{+}(4,3)$, the unions $\Delta_{\mathcal{G}_{23}}=\displaystyle\bigcup_{i\in I}\Delta_i$, $I=\{1,7,8,9,14,17,18\}$ and $\Delta_{\mathcal{G}_{24}}=\displaystyle\bigcup_{j\in J}\Delta_j$, $J=\{1,7,8,9,12,17,18\}$ give two non-isomorphic directed strongly regular graphs
with parameters $(96,29,23,10,8)$.

\subsection{Directed strongly regular graphs from Mathieu group $M_{11}$}\label{subsec:3.3}

The Mathieu group $M_{11}$ is a sporadic simple group. The suborbits of a point stabilizer may interact in non-obvious ways, and suitable unions of them can lead to parameter sets that are not suggested by more routine constructions. For general information about Mathieu groups, we refer the reader to \cite{atlas}.  

\subsubsection{\textbf{DSRG(165,60,36,23,21)}}\label{subsubsec:3.3.1}

For the primitive degree $165$ representation of the Mathieu group $M_{11}$, the unions $\Delta_{\mathcal{G}_{25}}=\Delta_2\cup \Delta_3\cup \Delta_5$ and 
$\Delta_{\mathcal{G}_{26}}=\Delta_2\cup \Delta_3\cup \Delta_6$ give two non-isomorphic directed strongly regular graphs
with parameters $(165,60,36,23,21)$. This parameter set does not belong to any known
family of directed strongly regular graphs listed in \cite{dsrgbaza}.

\section{Classification of vertex-transitive and vertex-primitive quasi-strongly regular digraphs}\label{sec:4}

In this section we give information about directed quasi-strongly regular graphs constructed under the action of transitive permutation groups of degree $n$, $n\in\{3,\dots,30\}$, and primitive permutation groups of degree $n$, $n\in\{31,\dots,110\}$, up to isomorphism. For primitive groups, the classification was performed for permutation representations of rank at most $30$. 

Since directed strongly regular graphs form an important special case of directed quasi-strongly regular graphs, it is natural to compare our classification with earlier results on vertex-transitive directed strongly regular graphs. Several authors have studied directed strongly regular graphs with transitive automorphism groups. Fiedler, Klin and Muzychuk in \cite{FKM2002} classified small vertex-transitive directed strongly regular graphs, while the authors in \cite{rez2} constructed infinite families of vertex-transitive directed strongly regular graphs.  Gy\"urki later in \cite{Gyurki2021} extended the enumeration of vertex-transitive directed strongly regular graphs to order $31$. 

The classification of vertex transitive directed strongly regular graphs can be efficiently approached via coherent configurations, or equivalently non-commutative association schemes, and suitable mergings of their relations (\cite{FKM2002,KMMZ2004}). In this paper, we applied the construction from the Theorem \ref{glavni_tm} for transitive permutation groups of degree $n\in\{3,\dots,30\}$ and obtained a complete classification of quasi-strongly regular digraphs. The suborbit-union construction from Theorem \ref{glavni_tm} provides a uniform framework for constructing and classifying quasi-strongly regular digraphs from transitive and primitive permutation groups, with directed strongly regular graphs occurring as the special case of grade one. The resulting classification provides a database of vertex-transitive quasi-strongly regular digraphs. A further motivation for this exhaustive search is that the adjacency matrices of quasi-strongly regular digraphs obtained in this way provide a rich and systematic source of input matrices for coding-theoretic constructions. In particular, they can be used to construct linear codes, including self-orthogonal codes, thereby giving an additional application of quasi-strongly regular digraphs beyond graph classification.
	
The results are described by the following theorem. 

	\begin{thm}
		Up to isomorphism, there are $45587$ directed quasi-strongly regular graphs, admitting a vertex-transitive automorphism group of degree $n$, $n\in\{3,\dots,30\}$. $472$ of them are directed strongly regular graphs with $0<t<k$. 
	\end{thm}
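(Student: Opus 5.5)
The statement is a computational classification, so the proof is an exhaustive search organised so that completeness and isomorphism-freeness can be checked. Completeness comes from the converse of Theorem~\ref{glavni_tm} proved in Section~\ref{sec:2}: every regular digraph admitting a vertex-transitive automorphism group $G$ of degree $n$ arises from Theorem~\ref{glavni_tm} with $\Omega$ the vertex set and $\Delta$ a union of non-diagonal $G_\alpha$-orbits. So it suffices to run, for every $n\in\{3,\dots,30\}$ and every group in the GAP Transitive Groups Library of degree $n$, over all unions $\Delta$ of non-diagonal suborbits with $\emptyset\neq\Delta\neq\Omega\setminus\{\alpha\}$.

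For each pair $(G,\Delta)$ I will build the adjacency matrix $A$ and test the defining condition of Subsection~\ref{subsubsec:1.2.2}. By vertex-transitivity it is enough to compute row $\alpha$ of $A^2$ and of $A\circ A^{T}$. Then $t=|\Delta\cap\Delta'|$, all entries of $A^2$ indexed by $\Delta$ must equal a common value $a$, and the entries indexed by $\Omega\setminus(\Delta\cup\{\alpha\})$ give the set $\{c_1,\dots,c_p\}$. Only the constancy on $\Delta$ is a genuine restriction; it is a union over suborbits, so it can be checked on one representative per suborbit. I will discard undirected outputs ($t=k$) and, to match the intended notion, digraphs in which $A$ is symmetric. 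For each survivor I record $(n,k,t,a;c_1,\dots,c_p)$ and flag the cases $p=1$ with $0<t<k$.

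The number of unions is $2^{r-1}$ for rank $r$. To keep this manageable I will reduce by the normaliser $N_{\mathrm{Sym}(n)}(G)$, which permutes suborbits. I will also work only with conjugacy classes of transitive groups, as the library already does. A further reduction is to consider only groups that are maximal with respect to giving a given digraph, or equivalently to compute $\mathrm{Aut}(\mathcal{G})$ afterwards and keep one representative. High-rank regular or near-regular groups of degree up to $30$ are the costly cases, for example $C_2^{k}$ with rank up to $30$. For these I would prune by checking the $a$-constancy incrementally as suborbits are added, although that condition is not monotone, so this is a heuristic ordering rather than a true prune.

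The main obstacle is isomorphism reduction. The same digraph arises from many groups and many unions, so I will compute canonical labellings of all candidates using the Digraphs package (bliss/nauty) and deduplicate within each $n$. Isomorphic digraphs have the same order, so no comparison across different $n$ is needed. This yields the totals $45587$ and $472$. Two independent sanity checks make the totals credible. First, the directed strongly regular counts for $n\le 30$ should agree with the Fiedler--Klin--Muzychuk and Gy\"urki enumerations of vertex-transitive DSRGs. Second, the complement map of Theorem~\ref{tm_parameters} should send each DSRG in the list to another DSRG in the list.
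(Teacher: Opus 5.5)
Your overall procedure matches the paper's. It is an exhaustive run of the construction of Theorem~\ref{glavni_tm} over the GAP Transitive Groups Library. Completeness comes from the converse theorem stated right after Theorem~\ref{glavni_tm}, and isomorph rejection is done by canonical labelling in Digraphs. Your row-$\alpha$ test is also correct: $t=|\Delta\cap\Delta'|$, and $(A^2)_{\alpha,\cdot}$ must be constant on one representative of each suborbit in $\Delta$.

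The step that does not fit the statement is the filter discarding outputs with $t=k$. For a $k$-regular digraph, $t=k$ holds exactly when $A=A^{T}$. So your two filters are one filter, and it removes every undirected example. In the paper's definitions these are quasi-strongly regular digraphs:
\begin{itemize}
\item Subsection~\ref{subsubsec:1.2.2} calls the case $t=k$ a quasi-strongly regular graph, as a special case of a QSRD.
\item Corollary~\ref{cor_single_orbit_qsrd} applies unchanged to a self-paired non-diagonal suborbit. There it produces an undirected graph that it still calls a directed quasi-strongly regular graph.
\item The statement imposes $0<t<k$ only on the directed strongly regular subcount. The condition $t<k$ there would be redundant if symmetric examples had already been removed from the total.
\end{itemize}
As you have set it up, your search therefore enumerates a smaller family than the one the number $45587$ refers to. The sentence ``This yields the totals $45587$ and $472$'' is not justified. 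You should keep the $t=k$ outputs in the total and impose $0<t<k$ only when extracting the $472$.

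Two smaller points. First, your example of a costly case is off. Degree at most $30$ forces $m\le 4$ for a regular $C_2^{m}$, giving rank $16$. All its suborbits are self-paired, so under your own filter it would contribute nothing. The genuinely expensive cases are groups of rank close to $n$ for $n$ near $30$, such as the regular groups of orders $28$ to $30$. Second, restricting in advance to groups ``maximal for a given digraph'' is not the same as deduplicating afterwards. The clean a priori reduction is to 2-closed transitive groups: a transitive $G$ and its 2-closure have the same orbitals, hence the same suborbits, and so yield exactly the same digraphs.
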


Also, for primitive permutation groups of degree $n\in\{31,\dots,110\}$ and rank at most $30$, using the construction method from Theorem \ref{glavni_tm} we obtain a complete classification of quasi-strongly regular digraphs described by the following theorem.
	
	\begin{thm}
	There are, up to isomorphism, $5517$ directed quasi-strongly regular graphs
	constructed under the action of vertex-primitive permutation groups of degree
	$n\in\{31,\dots,110\}$ and rank at most $30$.
	\end{thm}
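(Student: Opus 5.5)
The count in the statement is the output of an exhaustive computer search, so the plan is to organise that search so that it is provably complete and free of duplicates. Completeness rests on the earlier converse theorem of Section 3. Any digraph $\mathcal{G}$ whose automorphism group contains a primitive subgroup $G$ of degree $n$ arises as in Theorem \ref{glavni_tm}, with out-neighbourhood $\Delta$ of $\alpha$ a union of $G_\alpha$-orbits. Hence, for each $n\in\{31,\dots,110\}$, I will run through the \textit{GAP Primitive Groups Library} \cite{primgrp}, one representative per conjugacy class of primitive groups of degree $n$. Replacing $G$ by a conjugate only relabels vertices, so it does not change the isomorphism types obtained.

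For each such $G$ of rank $r\leq 30$, I fix $\alpha$ and compute the $G_\alpha$-orbits $\Delta_1,\dots,\Delta_{r-1}$. I then enumerate all $2^{r-1}-1$ non-empty subsets $S\subseteq\{1,\dots,r-1\}$ and put $\Delta=\bigcup_{i\in S}\Delta_i$. The rank bound is exactly what keeps this enumeration finite and feasible: there are at most $2^{29}$ subsets per group.

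For each $\Delta$, I build the adjacency matrix $A$ and test the defining identity $A^2=tI+aA+\sum c_iC_i$. Since $G$ acts on $A$ and on $A^2$, it suffices to check the rows of $A$ and $A^2$ indexed by $\alpha$, that is, one representative $\delta_j$ per suborbit. The conditions are then:
\begin{itemize}
\item $(A^2)_{\alpha\delta_j}$ takes a single value $a$ for all $j\in S$;
\item $(A^2)_{\alpha\alpha}=|\Delta\cap\Delta'|$, which gives $t$;
\item the remaining values $(A^2)_{\alpha\delta_j}$, $j\notin S$, $j\neq 0$, are arbitrary and list the $c_i$.
\end{itemize}
Only the first condition is restrictive. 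Its effect is that, with $x\to y$, the path counts are constant along arcs, and the values elsewhere simply produce the grade $p$. In the single-orbit case, Corollary \ref{cor_single_orbit_qsrd} shows this always succeeds. As a cheap pruning step, I use the remark on complements together with Theorem \ref{tm_parameters}. A subset and its complement need not both pass, so both are tested, but the parameters of the complement can be predicted from those of the original.

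Next, I discard undirected graphs ($t=k$) if the paper's convention for digraphs requires this. I then reduce the survivors up to isomorphism using canonical labelling, via the Digraphs package \cite{digraphs} / nauty, across all groups and all degrees. Isomorphism only needs to be tested within the same parameter set $(n,k,t,a;c_1,\dots,c_p)$, because the parameters are isomorphism invariants. The final number of canonical forms is the count $5517$.

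The main obstacle I expect is computational rather than conceptual. Some primitive groups of degree near $110$ have rank close to $30$, which gives about $5\cdot 10^8$ subsets, and isomorphism testing across different groups producing the same digraph adds further cost. To handle this, I will first identify the rank-$r$ orbitals with their intersection numbers $p^k_{ij}$. The $\alpha$-row of $A^2$ for $\Delta=\bigcup_{i\in S}\Delta_i$ is $\sum_{i,j\in S}p^{\,\cdot}_{ij}$, so each test becomes integer arithmetic on the structure constants, with no matrix products needed. A second point of care is the definitional requirement that each $c_i$ is realised and that $c$-values may coincide with $a$. This has to be handled consistently with the definition in Section \ref{subsubsec:1.2.2}.
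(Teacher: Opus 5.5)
Your plan is correct and is essentially the paper's own argument. The paper gives no proof beyond reporting the exhaustive computer search: it applies Theorem~\ref{glavni_tm} to all unions of suborbits of the primitive groups of degree $31\leq n\leq 110$ and rank at most $30$, tests the QSRD conditions, and reduces the results up to isomorphism, which is what you describe; your $\alpha$-row test via the intersection numbers $p^k_{ij}$ is only an efficient implementation of that check. The loose ends are matters of convention rather than gaps (whether undirected graphs with $t=k$ and the complete digraph are counted), and your complement ``pruning'' prunes nothing as stated, although by Theorem~\ref{tm_parameters} you may skip the complement $\Omega\setminus(\{\alpha\}\cup\Delta)$ whenever $\Delta$ already passes with grade $p\geq 2$.
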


The computation showed that no vertex-primitive directed strongly regular graph with $0<t<k$ occurs in the considered primitive permutation representations.
 
	\begin{thm}
		There is no directed strongly regular graph on $n$ vertices, $n\in\{31,\dots,110\}$, with $0<t<k$, such that the primitive automorphism group $G$, of rank at most $30$, acts on the set of vertices.
	\end{thm}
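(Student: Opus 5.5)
This theorem is computational, so the proof is an exhaustive search organised by the reduction results established earlier. The first step is to reduce the statement to a finite enumeration. Let $\mathcal{G}$ be a DSRG$(n,k,t,\lambda,\mu)$ with $31\le n\le 110$ and $0<t<k$, and let $G\le\mathrm{Aut}(\mathcal{G})$ act primitively on the vertices with rank at most $30$. Because $G$ is vertex-transitive, the theorem following Theorem~\ref{glavni_tm} shows that $\mathcal{G}$ arises from Theorem~\ref{glavni_tm}. Concretely, the out-neighbourhood $\Delta$ of a fixed vertex $\alpha$ is a union $\bigcup_{i\in S}\Delta_i$ of non-diagonal $G_\alpha$-orbits, with $S\subseteq\{1,\dots,r-1\}$ and $r\le 30$. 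It is therefore enough to do the following for every primitive group $G$ of degree $n\in\{31,\dots,110\}$ and rank $r\le 30$ in the \textit{GAP Primitive Groups Library}:
\begin{itemize}
\item[(1)] compute the suborbits $\Delta_0=\{\alpha\},\Delta_1,\dots,\Delta_{r-1}$;
\item[(2)] for every nonempty $S$, form the digraph given by Theorem~\ref{glavni_tm};
\item[(3)] check that none of these digraphs satisfies $A^2=tI+\lambda A+\mu(J-I-A)$ with $0<t<k$.
\end{itemize}

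Two observations shrink the search. First, the condition $0<t<k$ forces $\Delta$ to be neither closed under pairing nor disjoint from its paired set $\Delta'$. So $S$ must contain a non-self-paired suborbit, and $S$ must not be closed under the pairing of Remark~\ref{napomena_paired}. In particular, every primitive group all of whose suborbits are self-paired (for example, every group of even order in which all suborbits are self-paired) can be discarded immediately. Second, the conditions can be tested inside the orbital association scheme, without forming $n\times n$ matrices. We precompute the intersection numbers $p^j_{ab}$ of the Schurian scheme $R_0,\dots,R_{r-1}$, as in Corollary~\ref{cor_single_orbit_qsrd}. For $A=\sum_{a\in S}A_a$ we then have $A^2=\sum_j\bigl(\sum_{a,b\in S}p^j_{ab}\bigr)A_j$. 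The digraph is a DSRG exactly when this coefficient takes a single value $\lambda$ for $j\in S$ and a single value $\mu$ for $j\notin S\cup\{0\}$. In addition, the coefficient $t$ at $j=0$ must satisfy $0<t<k=|\Delta|$. Each such check is a cheap computation with an $r\times r\times r$ array.

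Further pruning comes from the parameter side. Feasibility already rules out many pairs $(n,k)$: there must be integers $t,\lambda,\mu$ with $k(k+(\lambda-\mu))=t+(n-1)\mu$, integral eigenvalue multiplicities $m_1,m_2$, and the other conditions of Duval recorded in Brouwer's table \cite{dsrgbaza}. We therefore only examine subsets $S$ with $|\Delta|=k$ for a feasible $k$. By the complement proposition of Duval, it also suffices to handle $k\le (n-1)/2$ and take complements of $S$.

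The main obstacle is the size of the enumeration. With rank up to $30$ there can be up to $2^{29}$ subsets $S$ for a single group. To make this feasible we would first enumerate only subsets that are unions of orbits of the normaliser $N_{\mathrm{Sym}(n)}(G)$ acting on the suborbits, and work up to that symmetry. We would also run a backtracking search over the suborbits in which partial sums of $p^0_{ab}$ over $S$ bound $t$, and abort a branch as soon as $|\Delta|$ exceeds the largest feasible $k$. The last step is to record that, for every primitive group in the range, the search returns no $S$ meeting the DSRG conditions with $0<t<k$. That result is the theorem.
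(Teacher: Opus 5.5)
Your route is the paper's route. By the converse of Theorem~\ref{glavni_tm}, any such digraph comes from a union of non-diagonal $G_\alpha$-orbits, so the statement reduces to a finite check over the primitive groups of degree $31\le n\le 110$ and rank at most $30$ in the library. The paper's proof is nothing more than the report that this computation returns no DSRG with $0<t<k$. Three of your devices are sound:
testing via the coefficients $\sum_{a,b\in S}p^j_{ab}$ in the orbital configuration;
discarding $S$ that are closed under pairing or disjoint from their paired set;
passing to complements, which is valid since $\bar k-\bar t=k-t$ and complements of DSRGs with $0<t<k$ again satisfy $0<\bar t<\bar k$.

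However, two of your pruning steps are unsound as written, and for a nonexistence theorem any unsound pruning breaks the proof.

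\textbf{The feasibility identity has the wrong sign.} Multiplying $A^2=tI+\lambda A+\mu(J-I-A)$ by $J$ gives $k^2=t+\lambda k+\mu(n-1-k)$, that is, $k(k+\mu-\lambda)=t+(n-1)\mu$, not $k(k+\lambda-\mu)=t+(n-1)\mu$. The genuine DSRG$(6,2,1,0,1)$ satisfies the correct identity ($6=6$) but violates yours ($2\neq 6$). So filtering degrees $k$ with your version can throw away feasible cases.

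\textbf{The normaliser is used incorrectly.} You may not restrict to subsets $S$ that are unions of orbits of $N=N_{\mathrm{Sym}(n)}(G)$ on the suborbits. Such $S$ are exactly the $N$-invariant ones, so you would only find digraphs whose automorphism group contains $N$, whereas the theorem concerns every digraph admitting $G$. What the normaliser legitimately gives is isomorphism reduction. Since $G$ is transitive, $N=GN_\alpha$. Each $\sigma\in N_\alpha$ normalises $G_\alpha$, so it permutes the $G_\alpha$-orbits. It therefore maps the digraph built from $S$ isomorphically onto the one built from $\sigma(S)$. Hence it suffices to test one representative from each $N_\alpha$-orbit on the power set of $\{1,\dots,r-1\}$.

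With these two corrections, your search is exactly the computation the paper relies on.
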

	
The parameters of all directed quasi-strongly regular graphs constructed from transitive permutation groups of degrees $3$ to $30$, and from primitive permutation groups of degrees $31$ to $110$, up to isomorphism, together with their adjacency matrices and information about their full automorphism groups, are available at
\begin{center}
\url{https://www.math.uniri.hr/~matea.zubovic/qsrd/QSRDs.html}.
\end{center}

\begin{rem}
Gy\"urki in \cite{Gyurki2021} extended the enumeration of vertex-transitive directed strongly regular graphs to order 31, obtaining 140 equivalence classes and proving that the remaining open parameter sets of order at most 31 admit no vertex-transitive realizations. Consistently with this result, our construction does not produce any of these open parameter sets. Hence, if directed strongly regular graphs with such parameters exist, none of them can admit a vertex-transitive automorphism group.
\end{rem}

\section{Statements and Declarations}
\begin{itemize}
	\item Funding: This work has been fully supported by Croatian Science Foundation under the project 4571 and by the University of Rijeka under the project uniri-iskusni-prirod-23-155.
	\item Conflict of interest: The authors declare that they have no conflict of interest.
	\item Author Contributions: All authors contributed to the research and preparation of the manuscript. All authors read and approved the final manuscript.
	\item Data Availability Statement: The adjacency matrices of all constructed structures are publicly available and the download links are given in the article.
	All additional information about the structures is available on request.
\end{itemize}

\end{document}